\documentclass[12pt]{article}
\usepackage[utf8]{inputenc}
\usepackage{amssymb}
\usepackage{amsmath}
\usepackage{amsthm}
\usepackage[T1]{fontenc}
\usepackage{wasysym}
\usepackage{verbatim}
\usepackage{graphicx}
\usepackage{color}
\usepackage[margin=1.0in]{geometry}

\theoremstyle{plain}
\newtheorem{theorem}{Theorem}[section]

\theoremstyle{definition}
\newtheorem{definition}{Definition}[section]

\newcommand{\ignore}[1]{}

\newcommand{\hcm}[1][1]{\hspace*{#1 cm}}

\newcommand{\istrut}[2][0]{\rule[- #1 mm]{0mm}{#1 mm}\rule{0mm}{#2 mm}}

\newcommand{\paren}[1]{\left( #1 \right)}

\newcommand{\ceil}[1]{\lceil #1 \rceil}
\newcommand{\floor}[1]{\lfloor #1 \rfloor}


\newcommand{\DS}{\lambda}

\newcommand{\Kovari}{K\H{o}v\'{a}ri}
\newcommand{\Sos}{S\'{o}s}
\newcommand{\Turan}{Tur\'{a}n}
\newcommand{\Kollar}{Koll\'{a}r}
\newcommand{\Ronyai}{R\'{o}nyai}
\newcommand{\Szabo}{Szab\'{o}}

\begin{document}

\title{Lower Bounds on Davenport-Schinzel Sequences\\ via Rectangular Zarankiewicz Matrices}

\author{Julian Wellman\thanks{Work done as part of the Advanced Research course at Greenhills School, taught by Julie Smith}\\
Greenhills School\\
Ann Arbor, MI
\and
Seth Pettie\thanks{Supported by NSF grants CNS-1318294, CCF-1514383, and CCF-1637546.}\\
University of Michigan\\
Ann Arbor, MI}

\date{}
\maketitle

\begin{abstract}

An \emph{order-$s$ Davenport-Schinzel sequence} over an $n$-letter alphabet is one avoiding immediate repetitions and alternating subsequences with length $s+2$.  The main problem is to determine the maximum length of such a sequence, as a function of $n$ and $s$.  When $s$ is \emph{fixed} this problem has been settled (see Agarwal, Sharir, and Shor~\cite{ASS89},
Nivasch~\cite{Nivasch10} and Pettie~\cite{Pettie-DS-JACM}) but when $s$ is a function of $n$, very little is known about the extremal function $\DS(s,n)$ of such sequences.

In this paper we give a new recursive construction of Davenport-Schinzel sequences that is based on dense 0-1 matrices avoiding large all-1 submatrices (aka \emph{Zarankiewicz's Problem}.) In particular, we give a simple construction of $n^{2/t} \times n$ matrices containing $n^{1+1/t}$ 1s that avoid $t\times 2$ all-1 submatrices.

Our lower bounds on $\DS(s,n)$ exhibit three qualitatively different behaviors depending on the size of $s$ relative to $n$.  When $s \le \log\log n$ we show that $\DS(s,n)/n \ge 2^s$ grows exponentially with $s$. When $s = n^{o(1)}$ we show $\DS(s,n)/n \ge (\frac{s}{2\log\log_s n})^{\log\log_s n}$ grows faster than any polynomial in $s$.  Finally, when $s=\Omega(n^{1/t}(t-1)!)$, $\DS(s,n) = \Omega(n^2 s/(t-1)!)$ matches the trivial upper bound $O(n^2s)$ asymptotically, whenever $t$ is constant.
\end{abstract}

\section{Introduction}

In 1965 Davenport and Schinzel~\cite{DS65} introduced the problem of bounding the maximum length of a sequence on an alphabet of $n$ symbols that avoids any subsequence of the form $a\cdots b \cdots a \cdots b \cdots$ of length $s + 2$. We call any sequence $S$ which does not contain immediate repetitions and which does not contain an alternating subsequence of length $s + 2$ a Davenport-Schinzel (DS) sequence of order $s$. Let $|S|$ be the length of $S$, $\|S\|$ be the number of distinct symbols in $S$, and $DS(s,n)$ be the set of all Davenport-Schinzel sequences of order $s$ on $n$ symbols.  
We are interested in bounding the extremal function for
DS sequences. 
\[
\DS(s,n) = \max \{|S| \::\: S \in DS(s,n) \}
\]

The behavior of $\DS(s,n)$ is well understood when $s$ is fixed~\cite{HS86,ASS89,Nivasch10,Pettie-DS-JACM}, or when $s\ge n$~\cite{RoselleS71}.  However, very little is known when $s$ is a function of $n$ and $1 \ll s \ll n$.

\subsection{Fixed-order Davenport-Schinzel Sequences}

Most investigations of DS sequences has focused on the case of fixed $s$.  This is motivated by applications in computational geometry~\cite{AgarwalSharir95,SharirCKLPS86}, where DS sequences are used to bound the complexity of the lower envelope of $n$ univariate functions, each pair of which cross at most $s$ times, e.g., a set of $n$ degree-$s$ polynomials.  The following theorem synthesizes results of Davenport and Schinzel~\cite{DS65} ($s\in \{1,2\}$), Agarwal, Sharir, and Shor~\cite{ASS89} (sharp bounds for $s=4$, lower bounds for even $s\ge 6$), Nivasch~\cite{Nivasch10} (lower bounds for $s=3$, upper bounds for even $s\ge 6$), and Pettie~\cite{Pettie-DS-JACM} (upper bounds for all odd $s\ge 3$, lower bounds for $s=5$). 
Refer to Klazar~\cite{Klazar02} for a history of Davenport-Schinzel sequences
from 1965--2002, and Pettie~\cite{Pettie-DS-JACM,Pettie-GenDS11,Pettie15-SIDMA} for recent developments.

\begin{theorem}\label{thm:constant-s}
When $s$ is fixed, the asymptotic behavior of $\DS(s,n)$, as a function of $n$, is as follows.	
\[
\DS(s,n) = \left\{
\begin{array}{l@{\hcm}l@{\istrut[3]{0}}}
n									& s=1\\
2n-1								& s=2\\
2n\alpha(n) + O(n)					& s=3\\
\Theta(n2^{\alpha(n)})				& s=4\\
\Theta(n\alpha(n)2^{\alpha(n)})		& s=5\\
n\cdot 2^{(1 + o(1))\alpha^t(n)/t!}	& \mbox{for both even and odd } s\ge 6, \; t = \floor{\frac{s-2}{2}}.
\end{array}
\right.
\]
\end{theorem}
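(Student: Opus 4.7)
The plan is to dispatch the small-$s$ cases by direct combinatorial arguments and reserve the genuine work for $s \ge 3$, where the bounds involve the inverse Ackermann function $\alpha$ and a hierarchy of recursive constructions. For $s=1$, avoiding both immediate repetitions and any subsequence $aba$ forces each symbol to occur exactly once, so $\DS(1,n) = n$. For $s=2$, the matching upper bound $2n-1$ follows by an easy induction: some symbol occurs at most twice in any order-$2$ DS sequence, and deleting that symbol (followed by contracting any immediate repetition introduced) yields an order-$2$ DS sequence on $n-1$ letters of length at least $|S|-2$; the lower bound is realized by $1,2,1,3,1,4,\ldots,1,n$.

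For $s=3$, I would adopt the Hart--Sharir style double argument. For the upper bound, partition $S$ into \emph{chains} of first/last occurrences and interpret the chain structure as a union-find forest with path compression: each symbol occurrence is charged to a forest edge, and the standard amortized analysis produces the $O(n\alpha(n))$ bound. For the lower bound, I would build a sequence recursively via an Ackermann-like hierarchy $\{S_{i,j}\}$, where $S_{i,j+1}$ is obtained by substituting copies of $S_{i,j}$ into a carefully chosen skeleton; tracking the blow-up against the symbol count gives exactly $2n\alpha(n) + O(n)$.

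For $s \ge 4$, each additional allowed alternation in the forbidden pattern permits one more level of recursion both in the construction and in the upper-bound accounting. For even $s$, the upper bound follows from partitioning $S$ into blocks whose intra-block contributions are bounded by $\DS(s,\cdot)$ on smaller alphabets and whose inter-block structure is bounded by $\DS(s-2,\cdot)$, yielding a recurrence that solves to $n \cdot 2^{(1+o(1))\alpha^t(n)/t!}$ with $t = \floor{(s-2)/2}$; for odd $s$ one argues that $\DS(s,n) \le \alpha(n)\cdot \DS(s-1,n)$ up to lower-order terms, giving the extra $\alpha(n)$ factor visible at $s=5$. Matching lower bounds come from a $t$-fold iteration of the $s=3$ construction, with the $1/t!$ constant arising from the number of ways to interleave $t$ levels of recursive substitution.

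The main obstacle, as in the source papers, is aligning the lower-bound hierarchy with the upper-bound recurrence so that both exponents on $\alpha$ and the leading constants $1/t!$ actually match; this requires the tight combinatorial accounting developed in \cite{ASS89,Nivasch10,Pettie-DS-JACM}, including a delicate analysis of how ``formation trees'' of the construction map to the decomposition used in the upper bound. A secondary obstacle is closing the odd-$s$ gap at $s=5$, where extracting the sharp $\alpha(n) 2^{\alpha(n)}$ behavior requires a dedicated construction beyond naive interpolation between $s=4$ and $s=6$.
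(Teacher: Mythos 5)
The paper does not prove this theorem: it is explicitly presented as a synthesis of prior results, with the proof consisting of citations to Davenport--Schinzel \cite{DS65} ($s\in\{1,2\}$), Hart--Sharir \cite{HS86} and Nivasch \cite{Nivasch10} ($s=3$), Agarwal--Sharir--Shor \cite{ASS89} ($s=4$ and even $s\ge 6$ lower bounds), and Pettie \cite{Pettie-DS-JACM} (odd $s$). Your proposal is accordingly not comparable to an in-paper argument; it is a roadmap of those external papers. As a roadmap it attributes the techniques correctly (path compression for $s=3$, block decompositions and recursive substitution hierarchies for $s\ge 4$), and you rightly flag that the hard content --- matching the $\alpha^t(n)/t!$ exponents on both sides and the dedicated $s=5$ construction --- lives in \cite{ASS89,Nivasch10,Pettie-DS-JACM}. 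So the honest assessment is that you have reproduced the paper's stance (defer to the literature) dressed as a proof sketch, which is acceptable for a survey statement but is not a self-contained argument for anything beyond $s=1$.

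Two concrete imprecisions in the parts you do argue. For $s=2$, the claim that ``some symbol occurs at most twice'' is not a priori available (it is essentially what one is trying to prove), and deleting a symbol with two occurrences can force up to two further deletions to restore repetition-freeness, so your induction as stated yields $\DS(2,n)\le \DS(2,n-1)+4$, i.e.\ $4n-3$, not $2n-1$. The standard tight argument instead locates a symbol occurring \emph{exactly once} (take the closest pair of consecutive occurrences of any symbol and note that anything strictly between them can appear nowhere else, on pain of an $abab$), whose deletion costs at most $2$. For $s=3$, the union-find/path-compression correspondence of Hart and Sharir gives $\Theta(n\alpha(n))$ but not the sharp form $2n\alpha(n)+O(n)$; the leading constant $2$ requires Klazar's refined upper bound and Nivasch's matching lower bound, so citing only the path-compression equivalence undersells what is needed. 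Similarly, the clean recurrence $\DS(s,n)\le\alpha(n)\DS(s-1,n)$ for odd $s$ is not an established lemma; Pettie's odd-$s$ upper bounds go through a different and considerably more delicate recurrence.
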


Here $\alpha(n)$ is the slowly growing inverse-Ackermann function.  Observe that if we regard $\alpha(n)$ as a constant, the dependence of $\DS(s,n)$ on $s$ is {\em doubly exponential}. 
This doubly exponential growth can be extended to non-constant $s$,
but the constructions of~\cite{ASS89,Nivasch10,Pettie-DS-JACM} only
work when $s=O(\alpha(n))$.  When $s = \Omega(\alpha(n))$ the existing lower bounds break down, but the upper bounds of~\cite{ASS89,Nivasch10,Pettie-DS-JACM} continue to give non-trivial upper bounds for $s=o(\log n)$. 
They imply, for example, 
that $\DS(s,n) = O(n(\log^{\star\star\cdots\star}(n))^{s-2}))$, for any fixed number of stars.\footnote{This result is not stated explicitly in~\cite{ASS89,Nivasch10,Pettie-DS-JACM}, but it is straightforward to cobble together, e.g., from Pettie~\cite[Lemma 3.1(2,4) and Recurrence 3.3]{Pettie-DS-JACM}.  The $\star$ operator is defined for any $f$ that is strictly decreasing on $\mathbb{N}\backslash\{0\}$.  By definition $f^\star(n)=\min\{i\;|\; f^{(i)}(n) \le 1\}$, where $f^{(i)}$ is the $i$-fold iteration of $f$.}

\subsection{Large-order Davenport-Schinzel Sequences}\label{sect:RoselleStanton}

A straightforward pigeonhole argument (see \cite[p. 3]{Klazar02}) gives the following upper bound on $\DS(s,n)$.
\begin{equation}\label{eqn:ub}
\DS(s,n) \leq \binom{n}{2}s + 1 
\end{equation}
For fixed $s$ this bound is off by nearly a factor $n$, but for \emph{fixed $n$} this bound is quite tight as a function of $s$.  In fact, Roselle and Stanton~\cite{RoselleS71} showed that for $s=\Omega(n)$, $\DS(s,n) = \Theta(n^2s)$, and that for fixed $n$, $\lim_{s\rightarrow\infty} \DS(s,n)/s = \binom{n}{2}$, i.e., Eqn.~(\ref{eqn:ub}) is sharp up to the leading constant $\binom{n}{2}$.
Let us give a brief description of Roselle and Stanton's construction.  
The sequence $RS(s,n)[a_1,a_2,\ldots,a_n]$ is a $DS(s,n)$ sequence constructed from the alphabet $\{a_1,\ldots,a_n\}$ in which the first occurrences of each symbol are in the order $a_1a_2\cdots a_n$.
If omitted, take the alphabet to be $[1,2,\ldots,n]$.
The construction is recursive, and bottoms out in one of two base cases, depending on whether $s>n$ or $s\le n$ initially.
\begin{align*}
RS(2,n) &= 121314\cdots 1(n-1)1n1\\
RS(s,2)	&= 1212\cdots \mbox{(length $s+1$)}
\intertext{When $s,n > 2$ we construct $RS(s,n)$ inductively.}
RS(s,n) &= \operatorname{Alt}(s,n)\cdot RS(s-1,n-1)[n,n-1,\ldots,2]\\
\mbox{where }\;
\operatorname{Alt}(s,n) &= \overbrace{1212\cdots 12}^{\ceil{\frac{s-2}{2}}\; 2\mathrm{s}} \overbrace{1313\cdots13}^{\ceil{\frac{s-2}{2}}\; 3\mathrm{s}}14\cdots \overbrace{1n1n\cdots 1n1}^{\ceil{\frac{s-2}{2}}\; n\mathrm{s}}
\end{align*}

In other words, with $\operatorname{Alt}(s,n)$ we introduce the maximum number of alternations between $1$ and each $k\in\{2,\ldots,n\}$, then ``retire'' the symbol $1$ and append a copy of $RS(s-1,n-1)$ on the alphabet $\{2,\ldots,n\}$.  Observe that it is crucial that the remaining alphabet be `reversed' in the recursive invocation of $RS(s-1,n-1)$.  In $\operatorname{Alt}(s,n)$ the symbols $2,3,\ldots,n$ appeared in this order, so to minimize the number of alternations the symbols in $RS(s-1,n-1)$ should make their first appearances in the order $n,n-1,\ldots,2$.  It is easily seen that $|\operatorname{Alt}(s,n)| = \Theta(sn)$ and $|RS(s,n)| = \Theta(\min\{n^2s,ns^2\})$, depending on whether $s>n$ or $s\le n$.  See~\cite{RoselleS71} for a careful analysis of the leading constant and lower order terms.

\subsection{Summary and New Results}

Suppose we fix $n$ at some very large value and let $s$ increase.  Theorem~\ref{thm:constant-s} (and a close inspection of the constructions of~\cite{ASS89,Nivasch10,Pettie-DS-JACM}) shows that $\DS(s,n)/n$ grows doubly exponentially with $s$, but only up to $s=O(\alpha(n))$.
For somewhat larger $s$ the best lower bounds on $\DS(s,n)/n$ are quadratic ($\Omega(s^2)$) \cite{DS65,RoselleS71} and best upper bounds exponential ($(\log^{\star\cdots\star}(n))^{s-2}$).  Eventually $s\ge n$ and $\DS(s,n)/s$ is known to be $\Theta(n^2)$, tending to $\binom{n}{2}$ in the limit~\cite{RoselleS71}.  Thus, when $1 \ll s \ll n$ we know very little about the true behavior of the extremal function $\DS(s,n)$.

In this paper we present a new construction of Davenport-Schinzel sequences that bridges the gap between the small-order ($s=O(\alpha(n))$) and large-order ($s=\Omega(n)$) regimes.  It exhibits three new \emph{qualitatively} different lower bounds on $\DS(s,n)/n$.
\begin{itemize}
\item When $s\le \log\log n$, $\DS(s,n)/n = \Omega(2^s)$ grows at least (singly) exponentially in $s$, which improves on~\cite{ASS89,Nivasch10,Pettie-DS-JACM} when $s\ge 2^{\Omega(\alpha(n))}$.

\item When $s > \log\log n$ we have $\DS(s,n)/n = \Omega((\frac{s}{2\log\log_s n})^{\log\log_s n})$.  For example, $\DS(\log n,n)/n > 2^{\Omega((\log\log n)^2)}$ is quasi-polylogarithmic in $n$.

\item Suppose that $s \ge n^{1/t}(t-1)!$ for an integer $t$.  In this case we obtain asymptotically sharp lower bounds on $\DS(s,n) = \Omega(n^2 s/(t-1)!)$ whenever $t$ is constant.
\end{itemize}

\subsection{Overview of the Paper} 

In Section~\ref{sect:warmup} we give a simple construction showing that $\DS(s,n)/n = \Omega(2^s)$, for $s$ up to $\log\log n$. In Section~\ref{sect:Zarankiewicz} we construct an $n^{2/t} \times n$ Zarankiewicz matrix with $n^{1 + 1/t}$ 1s which avoids $t \times 2$ all-1 submatrices. 
Zarankiewicz matrices are used in Section~\ref{sect:LargeConstruction} to construct Davenport-Schinzel sequences of length $\Omega(n^2s/(t - 1)!)$ when $s \geq n^{1/t}(t-1)!$. 
The space where $\log \log n \ll s \ll n^{o(1)}$ is addressed in Section~\ref{sect:MediumConstruction}.
We conclude with some remarks and open problems in Section~\ref{sect:conclusion}.

\section{A Simple Construction for Small Orders}\label{sect:warmup}

In this section we present a simple construction for the special case $s = \log\log n+2$, which can easily be scaled down to the case when $s\le \log\log n+1$. 
The sequence $S(k)$ is an order-$s(k)$ DS sequence over an $n(k)$-letter alphabet in which each symbol occurs $\mu(k)$ times.  We will construct $S(k+1)$ inductively from $S(k)$ and thereby obtain recursive definitions for $n(k+1),s(k+1),\mu(k+1)$.  Let $S(k)[a_1,\ldots,a_{n(k)}]$ denote a copy of $S(k)$ in which the letters $a_1,\ldots,a_{n(k)}$ make their first appearance in that order, and let $\overline{S}$ be the reversal of $S$.  If left unspecified, the alphabet is $[1,\ldots,n(k)]$.

In the base case $k=0$ we let $S(0) = 12$.  Thus,
\begin{align*}
n(0) &= 2, & \mu(0) &= 1,  & s(0) &= 1.
\end{align*}

Now we construct $S(k + 1)$ from $S(k)$. Arrange $n(k)^2$ distinct symbols in an $n(k) \times n(k)$ matrix. Let $C_i$ (and $R_i$) be the sequences of symbols in column $i$ (and row $i$), $1 \leq i \leq n(k)$, listed in increasing order of row index (and column index).  The sequence $S(k+1)$ is constructed as follows:

\[
S(k+1) = \overline{S(k)[C_1]}\:\overline{S(k)[C_2]}\cdots \overline{S(k)[C_{n(k)}]}
S(k)[\overline{R_1}]\, S(k)[\overline{R_2}] \cdots S(k)[\overline{R_{n(k)}}]
\]

It follows that $S(k+1)$ has the following parameters.
\begin{align*}
n(k+1) &= n(k)^2		& \mu(k+1) &= 2\mu(k)	& s(k+1)=\max\{3,s(k)+1\}
\end{align*}
The expression for $n(k+1)$ is by construction and the expression for $\mu(k+1)$ follows from the fact that each symbol appears in one row and one column.  The claim that $s(k+1) = \max\{3,s(k)+1\}$ requires a more careful argument.  Consider two symbols $a,b$ at positions $(i,j)$ and $(i',j')$ in the $n(k)\times n(k)$ symbol matrix.  If $i\neq i'$ and $j\neq j'$ then we may see the subsequence $abab$ in $S(k+1)$, but never $ababa$.  Suppose that $i=i'$ and $j<j'$.  In the first half of $S(k+1)$, all $a$s (in $\overline{S(k)[C_j]}$) precede all $b$s (in $\overline{S(k)[C_{j'}]}$) and in the second half of $S(k+1)$, all occurrences of $a$ and $b$ appear in $S(k)[\overline{R_i}]$.  Moreover, because $a$ precedes $b$ in $R_i$, the first occurrence of $b$ precedes the first occurrence of $a$ in $S(k)[\overline{R_i}]$.
Symmetric observations hold when $i<i'$ and $j=j'$.
Thus, for any two symbols $a,b$, either $ababa$ does not appear in $S(k+1)$ or $S(k+1)$ introduces one more alternation than $S(k)$.  
We conclude that $s(k+1) = \max\{3,s(k)+1\}$.

By induction on $k$, We have the following closed form bounds on the parameters of $S(k)$.
\begin{align*}
n(k) &= 2^{2^k} \\
s(k) &= k + 2\\
\mu(k) &= 2^k
\end{align*}

As constructed $S(k+1)$ contains immediate repetitions: the last symbol of $\overline{S(k)[C_{n(k)}]}$ is identical to the first symbol of $S(k)[\overline{R_1}]$.  In order to make $S(k+1)$ a proper order-$s(k+1)$ DS sequence we must remove one of these copies, and apply the procedure recursively to each copy of $S(k)$.  The fraction of occurrences removed is slightly more than $1/8$.\footnote{It is dominated by the occurrences removed in copies of $S(1)$, which has length $8$ originally and length 7 afterward.}

\begin{theorem}
For any $s\le \log\log n+2$, $\lambda(s,n) = \Omega(n \cdot 2^s).$
\end{theorem}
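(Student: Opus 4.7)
The plan is to unpack the three recurrences for $n(k)$, $\mu(k)$, $s(k)$ into closed form, observe that $S(k)$ already meets the target bound at the parameter values it is designed for, and then extend to an arbitrary pair $(s, n)$ by concatenating disjoint-alphabet copies.

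First I would solve the recurrences directly: $n(k{+}1) = n(k)^2$ with $n(0) = 2$ gives $n(k) = 2^{2^k}$; $\mu(k{+}1) = 2\mu(k)$ with $\mu(0) = 1$ gives $\mu(k) = 2^k$; and $s(k{+}1) = \max\{3, s(k){+}1\}$ with $s(0) = 1$ gives $s(k) = k+2$ for every $k \ge 1$ (the max only binds at $k=0$). Multiplying, $|S(k)| = n(k)\mu(k) = n(k) \cdot 2^{s(k)-2}$, which is the desired asymptotic bound up to the constant $\tfrac{1}{4}$.

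Next, for any target pair $(s, n)$ with $3 \le s \le \log\log n + 2$, I would set $k := s - 2 \ge 1$. The hypothesis rearranges exactly to $n(k) = 2^{2^{s-2}} \le n$, so a single copy of $S(k)$ fits inside an $n$-letter alphabet. To fill out an alphabet of size exactly $n$, I would concatenate $q := \lfloor n/n(k) \rfloor$ copies of $S(k)$ on pairwise disjoint alphabets and pad with the $n - q\,n(k) < n(k)$ unused symbols however convenient. Because any two symbols drawn from distinct copies can alternate at most twice in the concatenation, it remains an order-$s$ DS sequence, and its length is at least
\[
q \cdot n(k) \cdot 2^{s-2} \;\ge\; \tfrac{1}{2}\, n \cdot 2^{s-2} \;=\; \Omega(n \cdot 2^s),
\]
using $\lfloor n/m \rfloor \cdot m \ge n/2$ whenever $m \le n$. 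The small-order cases $s \in \{1, 2\}$ I would dispatch by the classical identities $\lambda(1,n) = n$ and $\lambda(2,n) = 2n-1$, each of which is $\Omega(n \cdot 2^s)$ for those bounded values of $s$.

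There is no genuine obstacle here: the difficult content of the section — namely the alternation analysis showing that $S(k)$ is itself an order-$s(k)$ DS sequence — has already been carried out in the construction above. The only item requiring any bookkeeping is the multiplicative loss from excising immediate repetitions, both within a single $S(k)$ (bounded in the footnote by a factor of roughly $7/8$) and at the $q-1$ seams between concatenated copies (an additive loss of $q-1 \le n/n(k)$, which is dwarfed by the main term $\tfrac{n}{2} \cdot 2^{s-2}$). Both losses are absorbed into the $\Omega(\cdot)$, completing the proof.
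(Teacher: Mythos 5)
Your proposal is correct and follows essentially the same route as the paper: partition the $n$-letter alphabet into blocks of size $n' = 2^{2^{s-2}}$ and concatenate $\lfloor n/n' \rfloor$ disjoint-alphabet copies of $S(s-2)$, each contributing length $\Omega(n' 2^{s-2})$. The extra bookkeeping you supply (closed forms, the $s\in\{1,2\}$ cases, the seams, and the loss from deleting immediate repetitions) is all consistent with what the paper leaves implicit.
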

\begin{proof}
Partition the alphabet $[n]$ into subsets of size $n' = 2^{2^{s-2}}$ and concatenate $\floor{n/n'}$ copies of $S(s-2)$, one on each part of the alphabet.  Each part has length $\Omega(n'2^{s-2})$, so the whole sequence has length $\Omega(n2^{s-2})$.
\end{proof}

In the case of $s = \log \log n+2$, we can get a sequence of length $\Omega(n \log n)$, which is not known from prior constructions.  The longest sequences that can be generated using~\cite{Nivasch10,Pettie-DS-JACM,Pettie15-SIDMA}
have length $O(n2^{2^{\alpha(n)}})$.

\section{Rectangular Zarankiewicz Matrices}\label{sect:Zarankiewicz}

The construction of the previous section is limited by the fact that each letter of $S(k+1)$ appears in only two copies of $S(k)$ (corresponding to the letter's row and column).  In order to bound $s(k+1) \le s(k)+1$, it was crucial that each pair of symbols appeared in only one \emph{common} copy of $S(k)$.  In general, one could imagine generalized constructions of $S(k+1)$ over an $n$-letter alphabet that are formed by concatenating $m$ copies of $S(k)$, each over a subset of the alphabet, with the property that two symbols do not appear in \emph{too many} common subsets.  Designing such a system of subsets is an instance of Zarankiewicz's problem.

\begin{definition} (Zarankiewicz's Problem)
Define $z(m,n; s,t)$ to be the maximum number of 1s in an $m\times n$ 0-1 matrix that contains no all-1 $s\times t$ submatrix.  Define $z(n,t)$ to be short for $z(n,n; t,t)$.
\end{definition}

The \Kovari, \Sos, and \Turan\ theorem~\cite{KovariST54}, explicitly proven in~\cite{Hylten-Cavallius}, gives the following general upper bound on $z(m,n;s,t)$.
\[
z(m,n; s,t) \leq (s-1)^{1/t}(n-t+1)m^{1-1/t}+(t-1)m
\]
It is generally believed that the \Kovari-\Sos-\Turan{} 
upper bound on $z(n,t) = O(n^{2-1/t})$ is asymptotically sharp, but
this has only been established for $t\in\{2,3\}$~\cite{Brown66}.  
\Kollar, \Ronyai, and \Szabo~\cite{KollarRS96} gave sharp bounds on 
$z(n,n,t!+1,t) = \Omega(n^{2-1/t})$, where the forbidden submatrix is highly skewed.  In this paper we need bounds on Zarankiewicz's problem in which both the $m\times n$ matrix and forbidden pattern are rectangular.  The following theorem may be folklore in some quarters; nonetheless, it is not mentioned in a recent survey~\cite{FurediSimonovits}. The only existing construction avoiding $t \times 2$ 
all-1 submatrices is tailored to square matrices~\cite{Furedi96b}.

\begin{theorem}\label{thm:Z}
For any fixed integer $t\ge 2$ and large enough $n$,
\[
z(n^{2/t}, n, t, 2) = \Theta(n^{1+1/t}).
\]
\end{theorem}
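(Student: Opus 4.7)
The plan is to handle the two directions separately. The upper bound is a direct corollary of the Kővári-Sós-Turán inequality quoted just before the theorem: substituting $m = n^{2/t}$ for the rows, $n$ for the columns, and forbidden pattern dimensions $t \times 2$ yields a leading term of order $\sqrt{t-1}\,(n-1)\,n^{1/t}$, which is $O(n^{1+1/t})$ for any fixed $t \ge 2$ (the additive $(2-1)\cdot n^{2/t}$ term is lower order).

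For the lower bound I would use a polynomial construction over a finite field, generalizing the classical Erdős-Rényi-Sós $C_4$-free construction from $t=2$ to arbitrary $t$. Choose a prime power $q = \Theta(n^{1/t})$ (available by Bertrand's postulate, taking $q$ to be the largest prime at most $n^{1/t}$). Index the rows of the matrix by pairs in $\mathbb{F}_q^2$ and the columns by polynomials $p \in \mathbb{F}_q[x]$ of degree strictly less than $t$, so there are $q^2 \le n^{2/t}$ rows and $q^t = \Theta(n)$ columns. Place a $1$ in entry $((x,y), p)$ iff $y = p(x)$. Each column is the graph of a function and hence contains exactly $q$ ones, for a total of $q^{t+1} = \Theta(n^{1+1/t})$ ones. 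Padding with all-zero rows and columns then produces an $n^{2/t} \times n$ matrix with the same number of ones.

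The only substantive thing to check is that this matrix contains no $t \times 2$ all-$1$ submatrix. If two distinct columns $p \ne p'$ had $1$s in $t$ common rows, then $p(x) = p'(x)$ for $t$ distinct values of $x$; but $p - p'$ is a nonzero polynomial of degree less than $t$ and so has at most $t-1$ roots over $\mathbb{F}_q$, a contradiction. This root-counting step is the heart of the argument, but it is immediate. The only real obstacle I anticipate is the minor bookkeeping required to ensure that a prime power $q$ can be chosen simultaneously small enough that $q^t \le n$ and $q^2 \le n^{2/t}$, and large enough that $q^{t+1} = \Omega(n^{1+1/t})$; this is handled by the standard estimate that there is a prime in $(m, 2m)$ for every $m > 1$.
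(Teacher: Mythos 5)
Your proposal is correct and follows essentially the same route as the paper: the upper bound via the K\H{o}v\'{a}ri--\S\'{o}s--Tur\'{a}n inequality, and the lower bound via the incidence matrix between evaluation pairs $(x,v)\in\mathbb{F}_q^2$ and polynomials of degree less than $t$, with the forbidden $t\times 2$ submatrix ruled out by counting roots of $p-p'$. Your handling of the passage from $q^t$ to general $n$ via Bertrand's postulate just makes explicit a step the paper dismisses as straightforward.
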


\begin{proof}
Let $q$ a prime power and $\mathbb{F}$ be the Galois field of order $q$.
We will show that $z(q^2, q^{t}, t, 2) \ge q^{t+1}$.  By~\cite{KovariST54} this bound is asymptotically sharp.  It is straightforward to extend this to any $n$ (not of the form $q^t$) with a constant factor loss.

We will construct a matrix $A \in \{0,1\}^{q^2\times q^t}$ as follows.  The columns of $A$ are indexed by all degree-$(t-1)$ polynomials over $\mathbb{F}$.  A polynomial $f_{\mathbf{c}}$ is identified with its coefficient vector $\mathbf{c} = (c_0,c_1,\ldots,c_{t-1}) \in \mathbb{F}^t$, where
\[
f_{\mathbf{c}}(x) = \sum_{i=0}^{t-1} c_ix^i.
\]
The rows of $A$ are indexed by \emph{evaluations} $(x,v)\in\mathbb{F}^2$.
The matrix $A$ is generated by putting a 1 wherever we see a correct evaluation:
\[
A((x,v),\mathbf{c}) = \left\{\begin{array}{ll}
1	& \;\;\mbox{ if $f_{\mathbf{c}}(x) = v$}\\
0	& \;\;\mbox{ otherwise.}
\end{array}
\right.
\]

Suppose $A$ actually contains a $t\times 2$ all-1 submatrix defined by rows 
$\{(x_i,v_i)\}_{i\in[0,t)}$ and columns $\mathbf{c},\mathbf{c}'$.
Clearly $x_0,\ldots,x_{t-1}$ are distinct field elements.
It follows from the definition of $A$ that 
$f_{\mathbf{c}}(x_i) - f_{\mathbf{c}'}(x_i) = 0$ for each $i\in [0,t)$.
However $(f_{\mathbf{c}} - f_{\mathbf{c}'})(x) = \sum_{i=0}^{t-1} (c_i-c_i')x^i$ is a degree-$(t-1)$ polynomial over $\mathbb{F}$ and therefore has at most $t-1$ roots.  It is impossible for $f_{\mathbf{c}} - f_{\mathbf{c}'}$ to have $t$ distinct roots, namely $x_0,\ldots,x_{t-1}$.

Each row $(x,v)$ of $A$ has precisely $q^{t-1}$ 1s, since for any partial coefficient vector $(c_1,\ldots,c_{t-1})$, there is some $c_0$ for which
$f_{(c_0,\ldots,c_{t-1})}(x)=v$.  Similarly, each column $\mathbf{c}$ of $A$ has precisely $q$ 1s since there is one value $v$ for which $f_{\mathbf{c}}(x)=v$.  Thus, $A$ contains precisely $q^{t+1}$ 1s.
\end{proof}

\section{Polynomial Order Davenport-Schinzel Sequences}\label{sect:LargeConstruction}

Let $q$ be a prime power and $\hat{s}\ge q$ be a parameter.
For each integer $t\ge 1$ we will construct an order-$O(\hat{s}(t-1)!)$ 
sequence $S_t(\hat{s},q)$ over an alphabet of size $q^t$ 
with length $\Omega(q^{2t}\hat{s})$.  
Phrased in terms of $n=q^t$ and $s=O(\hat{s}(t-1)!)$, this shows that $\DS(s,n) = \Omega(n^2s/(t-1)!)$.
The construction is inductive.  In the base case $t=1$ we revert to Roselle and Stanton's construction. (See Section~\ref{sect:RoselleStanton}.)
\[
S_1(\hat{s},q) = RS(\hat{s},q^t).
\]
Now suppose that $t\ge 2$.  Let $A$ be the $q^2\times q^t$ 0--1 matrix from Theorem~\ref{thm:Z}.  Each column of $A$ is identified with a symbol in the alphabet of $S_t(\hat{s},q)$ and each row is 
identified with a subset of its alphabet.
In particular, let 
$C_i$, $i\in[1,q^2]$, be the list of columns (symbols) in which $A(i,\star)=1$. 
We form $S_t(\hat{s},q)$ as follows:
\[
S_t(\hat{s},q) = S_{t-1}(\hat{s},q)[C_1]\cdot S_{t-1}(\hat{s},q)[C_2]\cdots S_{t-1}(\hat{s},q)[C_{q^2}],
\]
where $S_{t-1}(\hat{s},q)[X]$ is a copy of $S_{t-1}(\hat{s},q)$ over the alphabet $X$.  According to the proof of Theorem~\ref{thm:Z} $|C_i|=q^{t-1}$, so the alphabets have the requisite cardinality.  By construction we have
\begin{align*}
|S_t(\hat{s},q)| &= q^2\cdot |S_{t-1}(\hat{s},q)|\\
			&= q^2 \cdot \Omega(q^{2(t-1)}\hat{s})	& \mbox{inductive hypothesis}\\
                		&= \Omega(q^{2t}\hat{s}).
\end{align*}
Let $s_t=s(t,\hat{s},q)$ be the length of the longest alternating subsequence in $S_t(\hat{s},q)$, which would make it an order-$(s_t-1)$ DS sequence.  We want to bound $s_t$ in terms of $s_{t-1}$.  Pick two arbitrary symbols $a,b$.  Because $A$ avoids all-1 $t\times 2$ submatrices, $a$ and $b$ appear in up to $t-1$ common subsets among $\{C_i\}$ and therefore at least $q-(t-1)$ subsets in which the other does not appear.  Each subset of the first type contributes $s_{t-1}$ alternations between $a$ and $b$ and each subset of the second type contributes  1, in the worst case where they happen to be interleaved.  Thus, we have the following recursive expression for $s_t$.
\begin{align*}
s_1 &= \hat{s}+1		& \mbox{(because $RS(\hat{s},q^t)$ is an order-$\hat{s}$ DS sequence)}\\
s_t &\le (t-1)s_{t-1} + 2(q-t+1)
\end{align*}
Since $\hat{s}\ge q$, $s_t = O((t-1)!\hat{s})$.

\begin{theorem}\label{thm:DSlarge}
When $s = \Omega(n^{1/t}(t-1)!)$, 
$\DS(s,n)$ is $\Omega(n^2 s/(t-1)!)$ and $O(n^2s)$.
\end{theorem}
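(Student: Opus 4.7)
The bulk of the work has already been done: the construction $S_t(\hat{s},q)$ of Section~\ref{sect:LargeConstruction} produces a sequence of length $\Omega(q^{2t}\hat{s})$ over an alphabet of size $q^t$ whose longest alternating subsequence has length $O((t-1)!\,\hat{s})$, provided $q$ is a prime power and $\hat{s}\ge q$. My plan is to translate these parameters into the statement of the theorem and handle the residual technicalities.

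First I would set the parameters. Given $n$ and $s$ with $s \ge c\cdot n^{1/t}(t-1)!$ for a suitable constant $c$, choose $q$ to be the largest prime power with $q^t \le n$; by Bertrand's postulate (or any of the known gaps between consecutive primes) we have $q = \Theta(n^{1/t})$, so $q^t = \Theta(n)$. Set $\hat{s} = \Theta(s/(t-1)!)$, large enough that the order bound $s_t \le (t-1)s_{t-1} + 2(q-t+1)$ unrolled gives $s_t - 1 \le s$, and small enough that $\hat{s}\ge q$; the lower hypothesis on $s$ is exactly what makes both possible simultaneously. The construction then yields a sequence over $q^t = \Theta(n)$ symbols of length $\Omega(q^{2t}\hat{s}) = \Omega(n^2 s/(t-1)!)$, whose longest alternating subsequence has length at most $s+1$, i.e.\ an order-$s$ DS sequence. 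If one wants an alphabet of exactly $n$ symbols rather than $q^t$ symbols, one can simply regard the extra $n - q^t$ letters as unused, which only changes constants.

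There is a small loose end: in the recursive definition $S_t(\hat{s},q) = S_{t-1}(\hat{s},q)[C_1]\cdots S_{t-1}(\hat{s},q)[C_{q^2}]$, consecutive blocks may share a common symbol at the seam, creating an immediate repetition. The fix is the standard one used in Section~\ref{sect:warmup}: delete one occurrence at each offending boundary. Since at most $q^2 - 1$ positions are removed at the top level, and recursively at most a constant fraction of symbols total, the asymptotic length bound is preserved.

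Finally, the upper bound $\DS(s,n) = O(n^2 s)$ is immediate from the pigeonhole inequality~(\ref{eqn:ub}) stated in Section~\ref{sect:RoselleStanton}. The only step I expect to require care is the parameter calibration: verifying that for $s = \Omega(n^{1/t}(t-1)!)$ one can solve the recursion $s_t \le (t-1)s_{t-1} + 2(q-t+1)$ with $s_1 = \hat{s}+1$ and still conclude $s_t = O((t-1)!\,\hat{s})$ with $\hat{s} = \Theta(s/(t-1)!) \ge q = \Theta(n^{1/t})$. This is a routine induction, but it is exactly the inequality that dictates the threshold $s = \Omega(n^{1/t}(t-1)!)$ under which the theorem applies.
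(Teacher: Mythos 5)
Your proposal is correct and follows essentially the same route as the paper: the paper's Theorem~\ref{thm:DSlarge} is simply the parameter translation $n=q^t$, $s=O(\hat{s}(t-1)!)$ applied to the construction $S_t(\hat{s},q)$, with the hypothesis $s=\Omega(n^{1/t}(t-1)!)$ arising precisely from the requirement $\hat{s}\ge q$, and the upper bound coming from the pigeonhole inequality~(\ref{eqn:ub}). The details you supply (choosing $q$ via prime gaps, deleting symbols at block seams) are routine points the paper leaves implicit.
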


\section{Medium Order Davenport-Schinzel Sequences}\label{sect:MediumConstruction}

The construction of Theorem~\ref{thm:DSlarge} is asymptotically sharp when $t$ is constant (and $s$ polynomial in $n$), but becomes trivial when $t=\omega(\log n/\log\log n)$.  In this section we design a simpler construction that works well when $\log\log n < s = n^{o(1)}$. 

The construction is parameterized by a prime power $q$ and parameter $\hat{s}\leq q$.  The sequence $S_t(\hat{s},q)$ will be a sequence over an alphabet of size $q^{2^t}$.
In the base case $t=0$ we have
\[
S_0(\hat{s},q) = RS(\hat{s},q)
\]
so $|S_0(\hat{s},q)| = \Theta(q\hat{s}^2)$.  When $t\ge 1$ we build $S_t(\hat{s},q)$ using a truncated version of the Zarankiewicz matrix from Theorem~\ref{thm:Z}.
Let $\hat{q}=q^{2^{t-1}}$ and $A$ be the $\hat{q}^2\times \hat{q}^2$ 0-1 matrix avoiding $2\times 2$ all-1 submatrices.  
Let $A'$ consist of the first $\hat{q}\hat{s}$ rows of $A$; 
i.e., each row of $A'$ has $\hat{q}$ 1s and each column of $A'$ has $\hat{s}$ 1s. This particular matrix could have been constructed using mutually orthogonal Latin squares, still generated based on finite fields as in~\cite{Furedi96b} or~\cite{Bose38}.
As in Section~\ref{sect:LargeConstruction} we identify the columns with symbols and the rows with sequences of symbols $C_1,\ldots,C_{\hat{q}\hat{s}}$. The sequence $S_t(\hat{s},q)$ is formed as follows:
\[
S_t(\hat{s},q) = S_{t-1}(\hat{s},q)[C_1]\cdot S_{t-1}(\hat{s},q)[C_2] \cdots S_{t-1}(\hat{s},q)[C_{\hat{q},\hat{s}}]
\]
Assuming inductively that $|S_{t-1}(\hat{s},q)| = \Omega(q^{2^{t-1}}\hat{s}^{t+1})$,
we have
\begin{align*}
|S_{t}(\hat{s},q)| &= q^{2^{t-1}}\hat{s} |S_{t-1}(\hat{s},q)|\\
				&= q^{2^{t-1}}\hat{s}\cdot\Theta(q^{2^{t-1}}\hat{s}^{t+1})\\
                &= \Theta(q^{2^t}\hat{s}^{t+2})
\end{align*}
Each symbol appears in exactly $\hat{s}$ distinct sequences among $\{C_i\}$
and any two symbols appear in at most one common sequence among $\{C_i\}$.  Thus, if $s_t$ is the length of the longest alternating sequence in $S_t(\hat{s},q)$, we have
\begin{align*}
s_0 &= \hat{s}+1 	& \mbox{($RS(\hat{s},\star)$ is an order-$\hat{s}$ DS sequence)}\\
s_t &= s_{t-1} + 2(\hat{s}-1)
\end{align*}
Clearly $s_t = (2t+1)(\hat{s}-1) + 2$.
In terms of the alphabet size $n=q^{2^t}$, $t = \log\log_q n$.
In terms of $s=s_t$ and $n$, the length of $S_t(\hat{s},q)$ is
\[
\Theta(n\hat{s}^{t+2}) 
= \Omega\paren{n\paren{\frac{s}{2\log\log_q n+1}}^{\log\log_q n+2}}
= \Omega\paren{n\paren{\frac{s}{2\log\log_s n}}^{\log\log_s n}}.
\]

\begin{theorem}
For any $s=\Omega(\log\log n)$, $\DS(s,n) = \Omega(n(\frac{s}{2\log\log_s n+1})^{\log\log_s n + 1})$. 
For example, $\DS(\log n, n)/n = 2^{\Omega((\log\log n)^2)}$
is quasi-polylogarithmic in $n$.
\end{theorem}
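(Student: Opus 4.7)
The plan is to obtain the theorem as a corollary of the construction $S_t(\hat{s},q)$ analyzed immediately before the statement. That construction yields, for any prime power $q$ and any $\hat{s}\le q$, a DS sequence of order $s_t=(2t+1)(\hat s-1)+2$ and length $\Omega(q^{2^t}\hat s^{t+2})$ over an alphabet of size $q^{2^t}$. So all that remains is to reverse-engineer $(q,\hat s, t)$ from the input $(s,n)$ and to re-express the length bound purely in terms of $s$ and $n$.

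I would choose $t=\lfloor\log\log_{\hat s} n\rfloor$ and $\hat s=\lfloor (s-2)/(2t+1)\rfloor+1$, and let $q$ be any prime power with $\hat s\le q=O(\hat s)$, which exists by Bertrand's postulate. With these choices $q^{2^t}=\Theta(n)$ (so after partitioning the alphabet and concatenating $\Theta(1)$ disjoint copies, we cover all $n$ letters), the order of the resulting sequence is at most $s$ by construction, and $\hat s=\Theta(s/\log\log_s n)$ while $t+2=\Theta(\log\log_s n)$. Substituting into the length formula gives
\[
|S_t(\hat s,q)|=\Omega\!\paren{n\paren{\tfrac{s}{2\log\log_s n+1}}^{\log\log_s n+1}},
\]
which is the claimed bound. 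The hypothesis $s=\Omega(\log\log n)$ is used exactly to guarantee $\hat s\ge 2$ (equivalently, that $(2t+1)$ does not exceed $s-2$), so the chosen parameters are admissible.

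The illustrative corollary follows by setting $s=\log n$. Then $\log_s n=\log n/\log\log n$ and $\log\log_s n=\log\log n-\log\log\log n=\Theta(\log\log n)$, so the main bound gives $\DS(\log n,n)/n=\Omega\!\paren{(\log n/\log\log n)^{\Theta(\log\log n)}}=2^{\Omega((\log\log n)^2)}$, which is quasi-polylogarithmic in $n$ as claimed.

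The only nontrivial bookkeeping step — and the one I expect to be the main source of friction — is reconciling the ``natural'' exponent base $\log\log_q n$ arising from the recursion with the ``external'' exponent $\log\log_s n$ appearing in the theorem statement. Since $q=\Theta(\hat s)=\Theta(s/\log\log_s n)$, we have $\log\log q=\log\log s-O(\log\log\log s)$, so $\log\log_q n$ and $\log\log_s n$ differ only by a $1+o(1)$ factor, which is absorbed into the exponent without changing the $\Omega(\cdot)$ bound. Everything else is verifying that the floors and Bertrand-postulate rounding cost only constants, and that the $s=\Omega(\log\log n)$ hypothesis is exactly what is required for $\hat s$ to be at least a positive constant.
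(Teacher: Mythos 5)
Your proposal matches the paper's treatment: the theorem is stated there as an immediate corollary of the $S_t(\hat{s},q)$ construction of that section, via exactly the parameter substitution $n=q^{2^t}$, $s=s_t=(2t+1)(\hat{s}-1)+2$ and the conversion of $\log\log_q n$ to $\log\log_s n$ that you carry out, and the paper supplies no further proof. The one slight overstatement is your claim that $q^{2^t}=\Theta(n)$ (with $t$ and $q$ integral/prime-power constrained this need not hold; one only needs $q^{2^t}\le n$ and then tiles the alphabet with $\lfloor n/q^{2^t}\rfloor$ disjoint copies), but this does not affect the per-symbol density $\hat{s}^{t+2}$ and hence not the claimed bound.
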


\section{Conclusion and Open Problems}\label{sect:conclusion}

We have attained asymptotically tight bounds on $\DS(s,n)$ when $s = n^{\epsilon}$. Specifically, the trivial upper bound $\DS(s,n) = O(n^2 \cdot s)$ can be achieved asymptotically, with the leading constant depending on $\epsilon$.  Even when $s = n$ the true leading constant of $\DS(n,n)$ is only known approximately; it is in the interval [1/3,1/2]~\cite{RoselleS71,Klazar02}.  Several interesting open problems remain, among them:

\begin{itemize}
\item Our lower bounds on $\DS(s,n)$ when $1\ll s \ll n^{o(1)}$ are quite far from the best upper bounds in this range~\cite{Nivasch10,Pettie-DS-JACM}.  It is still consistent with all published results that $\DS(s,n)/n$ grows (at least) exponentially in $s$ for all $s\le \log n$, and that $\DS(s,n)=\Theta(n^2s)$ for $s\ge \log n$.

\item Our constructions are not very robust to slight variants in the definition of the extremal function $\DS(s,n)$.   For example, if we insist that the sequence be 3-sparse (every three consecutive symbols must be distinct) rather than 2-sparse (merely avoiding immediate repetitions), the Roselle-Stanton construction no longer works and we cannot claim that when $s>n^\epsilon$, $\DS(s,n) = \Omega(n^{2}s)$ is witnessed by some $3$-sparse sequence.  This is in sharp contrast to the fixed-$s$ world~\cite{Nivasch10,Pettie-DS-JACM}, which are highly robust to different notions of sparseness.

\item A popular way to constrain DS sequences is to specify the number blocks~\cite{ASS89,Nivasch10,Pettie-DS-JACM}. A \emph{block} is a sequence of distinct symbols.  Let $\DS(s,n,m)$ be the length of an order-$s$ DS sequence over an $n$-letter alphabet that is partitioned into $m$ blocks.  In the fixed-$s$ world~\cite{Nivasch10,Pettie-DS-JACM}, 
$\DS(s,n)$ is roughly $\DS(s,n,n)$; see, e.g., \cite[Lemma 3.1]{Pettie-DS-JACM}.  
Our constructions for $s$ in the ``small'' and ``medium'' range do give non-trivial bounds on $\DS(s,n,n)$, but say nothing interesting when $s=n^\epsilon$.  Bounding $\DS(s,n,n)$ is essentially identical~\cite{Pettie-FH11} to bounding the number of 1s in an $n\times n$ 0-1 matrix avoiding $2\times (s+1)$ alternating submatrices of the following form.
\[
\left(
\begin{array}{cccccc}
	&	1	&		& \cdots &	1	&\\
1	&		&	1	&		& 		&	1
\end{array}
\right)
\]
Clearly the extremal function $\DS(s,n,n)$ tends to $n^2$ 
as $s\rightarrow n$, but we know very little about the rate of convergence.  For example, how large must $s$ be in order for
$\DS(s,n,n) = \Omega(n^{2-o(1)})$?
\end{itemize}


\end{document}